\newtheorem{theorem}{Theorem}
\theoremstyle{plain}
\newtheorem{definition}{Definition}
\newtheorem{example}{Example}
\newtheorem{proposition}{Proposition}
\newtheorem{remark}{Remark}
\numberwithin{equation}{section}
\begin{document}
\title[Hadamard-Young-Nesbitt]{Two new definitions on convexity and related
inequalities}
\author{Mevl\"{u}t TUN\c{C}}
\address{Department of Mathematics, Faculty of Art and Sciences, University
of Kilis 7 Aral\i k, 79000, Turkey}
\email{mevluttunc@kilis.edu.tr}
\thanks{}
\subjclass[2000]{26D15}
\keywords{Hadamard's, Young's, Nesbitt's, Pachpatte's inequality}

\begin{abstract}
We have made some new definitions using the inequalities of Young' and
Nesbitt'. And we have given some features of these new definitions. After,
we established new Hadamard type inequalities for convex functions in the
Young and Nesbitt sense.
\end{abstract}

\maketitle

\section{Introduction}

The following definition is well known in the literature: A function $%
f:I\rightarrow 
%TCIMACRO{\U{211d} }%
%BeginExpansion
\mathbb{R}
%EndExpansion
,$ $\emptyset \neq I\subseteq 
%TCIMACRO{\U{211d} }%
%BeginExpansion
\mathbb{R}
%EndExpansion
,$ is said to be convex on $I$ if inequality

\begin{equation}
f\left( tx+\left( 1-t\right) y\right) \leq tf\left( x\right) +\left(
1-t\right) f\left( y\right)  \label{co}
\end{equation}%
holds for all $x,y\in I$ and $t\in \left[ 0,1\right] $. Geometrically, this
means that if $P,Q$ and $R$ are three distinct points on the graph of $f$
with $Q$ between $P$ and $R$, then $Q$ is on or below chord $PR$.

Let $I$ denote a suitable interval of the real line $%
%TCIMACRO{\U{211d} }%
%BeginExpansion
\mathbb{R}
%EndExpansion
$. A function $f:I\rightarrow 
%TCIMACRO{\U{211d} }%
%BeginExpansion
\mathbb{R}
%EndExpansion
$ is called convex in the Jensen sense or $J$-convex or midconvex if%
\begin{equation}
f\left( \frac{x+y}{2}\right) \leq \frac{f\left( x\right) +f\left( y\right) }{%
2}  \label{j}
\end{equation}%
for all $x,y\in I$.

Let $f:I\subseteq 
%TCIMACRO{\U{211d} }%
%BeginExpansion
\mathbb{R}
%EndExpansion
\rightarrow 
%TCIMACRO{\U{211d} }%
%BeginExpansion
\mathbb{R}
%EndExpansion
$ be a convex function and $a,b\in I$ with $a<b$. The following double
inequality:$\ $

\begin{equation}
f\left( \frac{a+b}{2}\right) \leq \frac{1}{b-a}\int_{a}^{b}f\left( x\right)
dx\leq \frac{f\left( a\right) +f\left( b\right) }{2}  \label{h-h}
\end{equation}%
is known in the literature as Hadamard inequality for convex function. Keep
in mind that some of the classical inequalities for means can come from (\ref%
{h-h}) for convenient particular selections of the function $f$. If $f$ is
concave, this double inequality hold in the inversed way.

In \cite{P}, Pachpatte established two Hadamard-type inequalities for
product of convex functions.

\begin{theorem}
Let $f,g:[a,b]\subseteq \mathbb{R}\rightarrow \lbrack 0,\infty )$ be convex
functions on $[a,b]$, $a<b$. Then%
\begin{equation}
\frac{1}{b-a}\int_{a}^{b}f(x)g(x)dx\leq \frac{1}{3}M(a,b)+\frac{1}{6}N(a,b)
\label{PA}
\end{equation}%
and%
\begin{equation*}
2f\left( \frac{a+b}{2}\right) g\left( \frac{a+b}{2}\right) \leq \frac{1}{b-a}%
\int_{a}^{b}f(x)g(x)dx+\frac{1}{6}M(a,b)+\frac{1}{3}N(a,b)
\end{equation*}%
where $M(a,b)=f(a)g(a)+f(b)g(b)$ and $N(a,b)=f(a)g(b)+f(b)g(a)$.
\end{theorem}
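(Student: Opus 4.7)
My plan is to reduce both inequalities to evaluating a handful of elementary integrals after invoking convexity together with the non-negativity hypothesis (which legitimises multiplying two convexity estimates).

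For the first inequality, I would first change variables via $x = ta + (1-t)b$, $t \in [0,1]$, so that
\begin{equation*}
\frac{1}{b-a}\int_a^b f(x)g(x)\,dx = \int_0^1 f(ta+(1-t)b)\,g(ta+(1-t)b)\,dt.
\end{equation*}
Convexity of $f$ and $g$ on $[a,b]$, combined with $f,g\ge 0$, allows me to bound the integrand pointwise by
\begin{equation*}
\bigl[tf(a)+(1-t)f(b)\bigr]\bigl[tg(a)+(1-t)g(b)\bigr],
\end{equation*}
which expands into $t^2 f(a)g(a) + (1-t)^2 f(b)g(b) + t(1-t)\bigl[f(a)g(b)+f(b)g(a)\bigr]$. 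Integrating term-by-term and using $\int_0^1 t^2\,dt = \int_0^1 (1-t)^2\,dt = \tfrac{1}{3}$ and $\int_0^1 t(1-t)\,dt = \tfrac{1}{6}$ recovers $\tfrac{1}{3}M(a,b) + \tfrac{1}{6}N(a,b)$.

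For the second inequality, the main trick is to observe that $\tfrac{a+b}{2}$ is the midpoint of the two points $ta+(1-t)b$ and $(1-t)a+tb$ for every $t\in[0,1]$. Applying the midpoint/Jensen version of convexity to each of $f$ and $g$ and then multiplying (again using $f,g\ge 0$) gives, for each $t$,
\begin{equation*}
f\!\left(\tfrac{a+b}{2}\right)g\!\left(\tfrac{a+b}{2}\right) \le \tfrac{1}{4}\bigl[f(u_t)+f(v_t)\bigr]\bigl[g(u_t)+g(v_t)\bigr],
\end{equation*}
where $u_t = ta+(1-t)b$ and $v_t = (1-t)a+tb$. I would integrate both sides in $t$ over $[0,1]$. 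The diagonal terms $f(u_t)g(u_t)$ and $f(v_t)g(v_t)$ each integrate to the mean value $\tfrac{1}{b-a}\int_a^b fg$ (by a symmetric change of variables). The cross terms $f(u_t)g(v_t)$ and $f(v_t)g(u_t)$ I would estimate by the same product-of-convex-bounds device, only now the weights on $f(a),f(b)$ are $(t,1-t)$ while those on $g(a),g(b)$ are $(1-t,t)$, so expanding and integrating produces $\tfrac{1}{6}M(a,b) + \tfrac{1}{3}N(a,b)$ (the roles of the $t^2$ and $t(1-t)$ integrals are swapped compared to part one). Collecting all four terms and multiplying through by $2$ yields the claimed bound.

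The only step that requires any care is bookkeeping the cross-term integral: keeping straight which weights attach to $f(a)g(a)$, $f(a)g(b)$, $f(b)g(a)$, $f(b)g(b)$ is what makes $N$ pick up the $\tfrac{1}{3}$ and $M$ the $\tfrac{1}{6}$, reversing the first inequality's pattern. Otherwise the argument is purely a convexity-plus-polynomial-integral computation, and the non-negativity hypothesis is used exactly where convexity estimates for $f$ and $g$ are multiplied.
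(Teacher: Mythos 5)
Your argument is correct, and it is essentially the standard proof of Pachpatte's result: the paper itself states this theorem without proof (citing Pachpatte), but the proofs it does give for the analogous Young-convex and Nesbitt-convex product theorems follow exactly your template --- bound each factor by its convexity estimate, multiply using non-negativity, expand, and integrate the resulting polynomial weights over $[0,1]$. Both your term-by-term computation for the first inequality and your midpoint-plus-cross-term bookkeeping for the second check out.
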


We recall the well-known Young's inequality which can be stated as follows.

\begin{theorem}
(\textbf{Young's inequality}, see \cite{2}, p. 117) If $a,b>0$ and $p,q>1$
satisfy $\frac{1}{p}+\frac{1}{q}=1,$ then%
\begin{equation}
ab\leq \frac{a^{p}}{p}+\frac{b^{q}}{q}.  \label{yo}
\end{equation}%
Equality holds if and only if $a^{p}=b^{q}.$
\end{theorem}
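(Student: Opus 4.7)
The plan is to derive Young's inequality directly from the convexity of the exponential function --- equivalently, from the concavity of $\ln$ --- which keeps the argument inside the conceptual framework of (\ref{co}). Since $a,b>0$, the logarithms $\ln(a^{p})$ and $\ln(b^{q})$ are well defined, and the hypothesis $\frac{1}{p}+\frac{1}{q}=1$ says precisely that $t:=\frac{1}{p}\in(0,1)$ with $1-t=\frac{1}{q}$.

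First I would apply the defining convexity inequality (\ref{co}) with $f=\exp$ at the points $x=\ln(a^{p})$ and $y=\ln(b^{q})$, using this $t$. After undoing the logarithms, the right-hand side becomes $\frac{1}{p}a^{p}+\frac{1}{q}b^{q}$, while the argument of $\exp$ on the left simplifies via
\begin{equation*}
\tfrac{1}{p}\ln(a^{p})+\tfrac{1}{q}\ln(b^{q})=\ln a+\ln b,
\end{equation*}
so the left-hand side collapses to $ab$. This delivers (\ref{yo}).

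For the equality clause I would appeal to the \emph{strict} convexity of $\exp$ (immediate from $\exp''=\exp>0$). In one direction, assuming $a^{p}=b^{q}=:c$, substitution gives $a=c^{1/p}$ and $b=c^{1/q}$, so $ab=c^{1/p+1/q}=c$ and $\frac{a^{p}}{p}+\frac{b^{q}}{q}=c\bigl(\frac{1}{p}+\frac{1}{q}\bigr)=c$, producing equality. In the other direction, strict convexity forces $\ln(a^{p})=\ln(b^{q})$ whenever equality holds in the Jensen step above, i.e.\ $a^{p}=b^{q}$.

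The main point to handle with care --- more a subtlety than an obstacle --- is the appeal to \emph{strict} (not merely ordinary) convexity of $\exp$ for the ``only if'' direction of the equality condition; once this is acknowledged, nothing further is required. A purely analytic alternative would be to minimize $\varphi(t)=\frac{t^{p}}{p}+\frac{b^{q}}{q}-tb$ on $(0,\infty)$ by solving $\varphi'(t)=0$, locating the minimum at $t=b^{q/p}$ and checking that $\varphi$ vanishes there; however, the convexity-of-$\exp$ route is shorter and better matches the perspective introduced by (\ref{co}).
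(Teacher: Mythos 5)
Your proof is correct: the convexity of $\exp$ applied at $x=\ln(a^{p})$, $y=\ln(b^{q})$ with weight $t=\frac{1}{p}$ yields (\ref{yo}) exactly as you describe, and the strict convexity of $\exp$ correctly settles both directions of the equality clause. Note, however, that the paper offers no proof of this theorem at all — it is quoted as a classical result with a reference to the literature — so there is no argument in the paper to compare yours against; your argument is the standard one and stands on its own.
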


\begin{example}
(\textbf{Nesbitt's inequality}, see \cite{1}, p. 37) For $a,b,c\in 
%TCIMACRO{\U{211d} }%
%BeginExpansion
\mathbb{R}
%EndExpansion
^{+}$, we have%
\begin{equation}
\frac{a}{b+c}+\frac{b}{a+c}+\frac{c}{a+b}\geq \frac{3}{2}.  \label{no}
\end{equation}
\end{example}

In the following sections our main results are given. We establish two new
class of convex functions and then we obtain new Hadamard type inequalities.

\section{Young-Convexity and Related Inequalities}

\begin{remark}
\bigskip If we take $a=t^{\frac{1-p}{p^{2}}}$ and $b=t^{\frac{1}{pq}}$ in (%
\ref{yo}), we have%
\begin{equation}
1\leq \frac{1}{p}t^{\frac{1}{p}-1}+\left( 1-\frac{1}{p}\right) t^{\frac{1}{p}%
}  \label{hay}
\end{equation}%
for all $t\in \left[ 0,1\right] .$
\end{remark}

We now give a new definition of the following using the above remark.

\begin{definition}
Let $f:I\rightarrow \lbrack 0,\infty ),$ $\emptyset \neq I\subseteq 
%TCIMACRO{\U{211d} }%
%BeginExpansion
\mathbb{R}
%EndExpansion
,$ $x,y\geq 0$ and $p>1.$ We say that $f:I\subset 
%TCIMACRO{\U{211d} }%
%BeginExpansion
\mathbb{R}
%EndExpansion
\rightarrow \lbrack 0,\infty )$ is Young-convex function, if 
\begin{eqnarray}
f\left( tx+\left( 1-t\right) y\right) &\leq &\left( \frac{1}{p}t^{\frac{1}{p}%
}+\frac{p-1}{p}t^{1+\frac{1}{p}}\right) f\left( x\right)  \label{def} \\
&&+\left( \frac{p-1}{p}\left( 1-t\right) t^{\frac{1}{p}}+\frac{1}{p}t^{\frac{%
1}{p}-1}\left( 1-t\right) \right) f\left( y\right)  \notag
\end{eqnarray}%
for all $x,y\in I$ and $t\in \left( 0,1\right] $. We denote this by $f\in
Yng(I).$ If the inequality (\ref{def}) is reversed,\ then $f$ is said to be
Young-concave function. Obviously, if we take $p\rightarrow 1$ in (\ref{def}%
), we obtain ordinary convex function in (\ref{co}).
\end{definition}

\begin{proposition}
If $f\in Yng(I),$ then $f$ is non-negative on $\left[ 0,\infty \right) $.
\end{proposition}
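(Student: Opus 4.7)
The plan is to specialize the defining inequality~(\ref{def}) to the diagonal $y=x$, so that its right-hand side collapses to a scalar multiple of $f(x)$, and then invoke the Young-type estimate~(\ref{hay}) from the preceding remark to show that this scalar is at least $1$, which forces $f(x)\geq 0$.

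Concretely, I fix $x\in I$ with $x\geq 0$ and set $y=x$ in~(\ref{def}). The left-hand side becomes $f(tx+(1-t)x)=f(x)$, while the right-hand side becomes $C(t)f(x)$ with
\[
C(t)=\tfrac{1}{p}t^{1/p}+\tfrac{p-1}{p}t^{1+1/p}+\tfrac{p-1}{p}(1-t)t^{1/p}+\tfrac{1}{p}t^{1/p-1}(1-t).
\]
A short algebraic simplification, grouping the three $t^{1/p}$ terms first and then absorbing the $-\tfrac{1}{p}t^{1/p}$ coming from the last summand, collapses this to $C(t)=\tfrac{1}{p}t^{1/p-1}+\tfrac{p-1}{p}t^{1/p}$, which is exactly the right-hand side of~(\ref{hay}). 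The Remark then yields $C(t)\geq 1$ for every $t\in(0,1]$, and the inequality is strict on $(0,1)$ since equality in~(\ref{yo}) requires $a^{p}=b^{q}$, which under the substitution $a=t^{(1-p)/p^{2}}$, $b=t^{1/pq}$ reduces to $t=1$.

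Choosing any $t\in(0,1)$, the inequality $f(x)\leq C(t)f(x)$ rearranges to $(C(t)-1)f(x)\geq 0$, and because $C(t)-1>0$ this forces $f(x)\geq 0$. The only real obstacle is the bookkeeping in the cancellation that reduces the four-term $C(t)$ to the two-term right-hand side of~(\ref{hay}); once that identity is verified, the Remark together with a one-line sign argument finishes the proof.
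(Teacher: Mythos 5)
Your proof is correct, and it rests on the same core move as the paper's own proof: specialize the defining inequality (\ref{def}) to $y=x$ so that it collapses to $f(x)\le C(t)\,f(x)$, and then argue that $C(t)>1$. The paper does this only at $t=\tfrac12$, computing the coefficient directly as $\tfrac{p+1}{p}\bigl(\tfrac12\bigr)^{1/p}$ and concluding from $\bigl(\tfrac{p+1}{p}(\tfrac12)^{1/p}-1\bigr)f(\kappa)\ge 0$ that $f(\kappa)\ge 0$. Your version keeps $t$ general, and your simplification of the four-term coefficient is right: the two $t^{1+1/p}$ contributions cancel, the $\pm\tfrac1p t^{1/p}$ terms cancel, and what remains is $C(t)=\tfrac1p t^{1/p-1}+\tfrac{p-1}{p}t^{1/p}$, i.e.\ exactly the right-hand side of (\ref{hay}). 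The genuine added value of your route is that it actually justifies the strict inequality $C(t)-1>0$ that the sign argument needs: the paper's conclusion tacitly requires $\tfrac{p+1}{p}(\tfrac12)^{1/p}>1$ for $p>1$, which is true but never verified there, whereas you obtain $C(t)>1$ for $t\in(0,1)$ from the equality case $a^{p}=b^{q}$ of Young's inequality (\ref{yo}), which under the substitution of the Remark forces $t^{-1}=1$, hence $t=1$. So the two proofs are structurally identical, but yours closes the small gap concerning strict positivity of the multiplier.
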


\begin{proof}
We have, for $\kappa \in 
%TCIMACRO{\U{211d} }%
%BeginExpansion
\mathbb{R}
%EndExpansion
_{+}$%
\begin{eqnarray*}
f\left( \kappa \right) &=&f\left( \frac{\kappa }{2}+\frac{\kappa }{2}\right)
\\
&\leq &\left( \frac{1}{p}\left( \frac{1}{2}\right) ^{\frac{1}{p}}+\frac{p-1}{%
p}\left( \frac{1}{2}\right) ^{1+\frac{1}{p}}\right) f\left( \kappa \right) \\
&&+\left( \frac{p-1}{p}\left( \frac{1}{2}\right) ^{\frac{1}{p}+1}+\frac{1}{p}%
\left( \frac{1}{2}\right) ^{\frac{1}{p}}\right) f\left( \kappa \right) \\
&=&2\left( \frac{1}{p}\left( \frac{1}{2}\right) ^{\frac{1}{p}}+\frac{p-1}{p}%
\left( \frac{1}{2}\right) ^{1+\frac{1}{p}}\right) f\left( \kappa \right) \\
&=&\frac{p+1}{p}\left( \frac{1}{2}\right) ^{\frac{1}{p}}f\left( \kappa
\right)
\end{eqnarray*}%
Thus, $\left( \frac{p+1}{p}\left( \frac{1}{2}\right) ^{\frac{1}{p}}-1\right)
f\left( \kappa \right) \geq 0\ $and so $f\left( \kappa \right) \geq 0.$
\end{proof}

\begin{theorem}
Let $f:I\subset 
%TCIMACRO{\U{211d} }%
%BeginExpansion
\mathbb{R}
%EndExpansion
\rightarrow \lbrack 0,\infty )$ be a Young-convex function and $p>1.$ Then
the following inequality holds: 
\begin{equation}
\frac{1}{b-a}\int_{a}^{b}f\left( x\right) dx\leq \frac{p^{2}+2p}{\left(
p+1\right) \left( 2p+1\right) }f\left( a\right) +\frac{3p^{2}}{\left(
p+1\right) \left( 2p+1\right) }f\left( b\right)  \label{0}
\end{equation}
\end{theorem}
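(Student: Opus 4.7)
The plan is to parametrize the integral over $[a,b]$ by a parameter $t \in [0,1]$, apply the defining inequality of Young-convexity pointwise, and then integrate each of the coefficient functions of $f(a)$ and $f(b)$ separately. The identification of the particular coefficients $\frac{p^{2}+2p}{(p+1)(2p+1)}$ and $\frac{3p^{2}}{(p+1)(2p+1)}$ suggests exactly which substitution and which ordering of $x,y$ must be used.

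First I would perform the change of variables $u = ta + (1-t)b$, so that $du = -(b-a)\,dt$ and
\[
\frac{1}{b-a}\int_{a}^{b} f(u)\,du \;=\; \int_{0}^{1} f\bigl(ta + (1-t)b\bigr)\,dt .
\]
Then, applying the defining inequality (\ref{def}) with $x = a$ and $y = b$, one obtains the pointwise bound
\[
f\bigl(ta+(1-t)b\bigr) \leq A(t)\,f(a) + B(t)\,f(b),
\]
where $A(t) = \tfrac{1}{p}t^{1/p} + \tfrac{p-1}{p}t^{1+1/p}$ and $B(t) = \tfrac{p-1}{p}(1-t)t^{1/p} + \tfrac{1}{p}t^{1/p-1}(1-t)$. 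Note that $B(t)$ is integrable on $(0,1]$ despite the factor $t^{1/p-1}$, since $1/p - 1 > -1$ for $p > 1$.

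The remaining step is to evaluate $\int_{0}^{1} A(t)\,dt$ and $\int_{0}^{1} B(t)\,dt$ explicitly. Each is a finite sum of Beta-type integrals $\int_0^1 t^{\alpha}(1-t)^{\beta}dt$ with small integer $\beta \in \{0,1\}$ and $\alpha \in \{1/p-1,\,1/p,\,1+1/p\}$. Placing the resulting rational expressions over the common denominator $(p+1)(2p+1)$ should collapse the coefficient of $f(a)$ to $\tfrac{p^{2}+2p}{(p+1)(2p+1)}$ and that of $f(b)$ to $\tfrac{3p^{2}}{(p+1)(2p+1)}$, giving the claimed inequality.

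The main obstacle is essentially bookkeeping: one must be careful to (i) pick the substitution $u = ta + (1-t)b$ rather than $u = tb + (1-t)a$, since the asymmetry of the definition (\ref{def}) in $x$ and $y$ makes the two choices produce different inequalities, and (ii) evaluate the four elementary integrals without algebra errors. A useful sanity check is to verify that $\int_0^1 [A(t) + B(t)]\,dt = \tfrac{p^{2}+2p + 3p^{2}}{(p+1)(2p+1)} = \tfrac{4p^{2}+2p}{(p+1)(2p+1)} = \tfrac{2p}{p+1}$, so that taking $f \equiv 1$ forces the right-hand side to be $\tfrac{2p}{p+1} \geq 1$, which is consistent with (\ref{hay}) averaged over $[0,1]$.
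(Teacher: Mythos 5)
Your proposal is correct and follows essentially the same route as the paper: apply the definition (\ref{def}) with $x=a$, $y=b$, integrate over $t\in[0,1]$, and evaluate the four Beta-type integrals, which indeed sum to $\frac{p^{2}+2p}{(p+1)(2p+1)}$ for $f(a)$ and $\frac{3p^{2}}{(p+1)(2p+1)}$ for $f(b)$. Your added observations about the integrability of $t^{1/p-1}$ near $0$ and the sanity check $\int_0^1[A(t)+B(t)]\,dt=\frac{2p}{p+1}$ are correct and go slightly beyond what the paper records.
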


\begin{proof}
By the Young-convexity of $f$, we have that%
\begin{eqnarray*}
f\left( tx+\left( 1-t\right) y\right) &\leq &\left( \frac{1}{p}t^{\frac{1}{p}%
}+\frac{p-1}{p}t^{1+\frac{1}{p}}\right) f\left( x\right) \\
&&+\left( \frac{p-1}{p}\left( 1-t\right) t^{\frac{1}{p}}+\frac{1}{p}t^{\frac{%
1}{p}-1}\left( 1-t\right) \right) f\left( y\right)
\end{eqnarray*}%
Integrating both side of inequality above with respect to $t$ \ on $[0,1]$,
we have%
\begin{eqnarray*}
\int_{0}^{1}f\left( ta+\left( 1-t\right) b\right) dt &=&\frac{1}{b-a}%
\int_{a}^{b}f\left( x\right) dx \\
&\leq &f\left( a\right) \frac{1}{p}\int_{0}^{1}t^{\frac{1}{p}}dt+f\left(
a\right) \frac{p-1}{p}\int_{0}^{1}t^{1+\frac{1}{p}}dt \\
&&+f\left( b\right) \frac{p-1}{p}\int_{0}^{1}\left( 1-t\right) t^{\frac{1}{p}%
}dt+f\left( b\right) \frac{1}{p}\int_{0}^{1}t^{\frac{1}{p}-1}\left(
1-t\right) dt
\end{eqnarray*}%
By computing the above integrals, we obtain 
\begin{equation*}
\frac{1}{b-a}\int_{a}^{b}f\left( x\right) dx\leq \frac{p^{2}+2p}{\left(
p+1\right) \left( 2p+1\right) }f\left( a\right) +\frac{3p^{2}}{\left(
p+1\right) \left( 2p+1\right) }f\left( b\right)
\end{equation*}%
which is the inequality in (\ref{0}).
\end{proof}

\begin{remark}
If we choose $p\rightarrow 1$, in (\ref{0}), we obtain right hand side of (%
\ref{h-h}).
\end{remark}

\begin{theorem}
Let $f:I\subset 
%TCIMACRO{\U{211d} }%
%BeginExpansion
\mathbb{R}
%EndExpansion
\rightarrow \lbrack 0,\infty )$ be a Young-convex function and $p>1.$ Then
the following inequality holds:%
\begin{eqnarray}
&&2^{\frac{1}{p}}\frac{p}{p+1}f\left( \frac{a+b}{2}\right)  \label{25} \\
&\leq &\frac{1}{b-a}\int_{a}^{b}f(x)dx  \notag \\
&\leq &\left( \frac{p\left( p+2\right) }{\left( p+1\right) \left(
1+2p\right) }+\frac{p-1}{p}\beta \left( \frac{1+p}{p},2\right) +\frac{1}{p}%
\beta \left( \frac{1}{p},2\right) \right) \left( \frac{f\left( a\right) +f(b)%
}{2}\right)  \notag
\end{eqnarray}
\end{theorem}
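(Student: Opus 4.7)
The plan is to prove the two inequalities separately, both via the standard Hadamard-type trick of applying Young-convexity to a well-chosen pair and then integrating in the convex-combination parameter $t$ over $[0,1]$.

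For the left inequality, I would first specialize the Young-convexity definition $(\ref{def})$ to $t=1/2$. A direct computation shows that both coefficients collapse to the common value $2^{-1/p}(p+1)/(2p)$, yielding the midpoint-type bound
\[
f\Bigl(\tfrac{x+y}{2}\Bigr)\le \tfrac{p+1}{2p}\,2^{-1/p}\bigl(f(x)+f(y)\bigr)
\]
for all $x,y\in I$. Applying this with $x=ta+(1-t)b$ and $y=(1-t)a+tb$, so that $(x+y)/2=(a+b)/2$, and integrating in $t$ on $[0,1]$, the elementary substitution $u=ta+(1-t)b$ (and its mirror image) turns each of the two resulting integrals into $\frac{1}{b-a}\int_a^b f(x)\,dx$, so that the right side acquires a factor of $2$; rearranging gives the claimed lower bound.

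For the right inequality, observe that the bound $(\ref{0})$ proved in the previous theorem is asymmetric in $f(a)$ and $f(b)$, whereas the target coefficient multiplies the symmetric quantity $(f(a)+f(b))/2$; this calls for a symmetrization. I would apply Young-convexity twice: once with $(x,y)=(a,b)$ to bound $f(ta+(1-t)b)$ and once with $(x,y)=(b,a)$ to bound $f(tb+(1-t)a)$. Adding the two inequalities and integrating over $t\in[0,1]$, the left side becomes $\frac{2}{b-a}\int_a^b f(x)\,dx$, while the right side collects a common coefficient in front of $f(a)+f(b)$ equal to
\[
\int_0^1\!\Bigl(\tfrac{1}{p}t^{1/p}+\tfrac{p-1}{p}t^{1+1/p}\Bigr)dt
+\int_0^1\!\Bigl(\tfrac{p-1}{p}(1-t)t^{1/p}+\tfrac{1}{p}t^{1/p-1}(1-t)\Bigr)dt.
\]
The first integral simplifies to $\tfrac{1}{p+1}+\tfrac{p-1}{2p+1}=\tfrac{p(p+2)}{(p+1)(2p+1)}$, while the two pieces of the second are recognized, from the definition $\beta(\alpha,\gamma)=\int_0^1 t^{\alpha-1}(1-t)^{\gamma-1}dt$, as $\tfrac{p-1}{p}\beta\bigl(\tfrac{p+1}{p},2\bigr)$ and $\tfrac{1}{p}\beta\bigl(\tfrac{1}{p},2\bigr)$ respectively. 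Dividing through by $2$ produces the target upper bound.

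The main obstacle is not analytic but combinatorial: spotting that the choice $t=1/2$ in $(\ref{def})$ makes both coefficients equal (so one actually gets a midpoint-style inequality) and then making the twin substitution $x=ta+(1-t)b$, $y=(1-t)a+tb$ for the lower estimate, together with the $a\leftrightarrow b$ symmetrization for the upper estimate. Once those choices are in place, everything reduces to the routine simplification of the $t=1/2$ coefficients and to identifying the two elementary Beta-function integrals.
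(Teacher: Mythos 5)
Your proposal is correct and matches the paper's proof essentially step for step: the $t=\tfrac12$ specialization of $(\ref{def})$, whose two coefficients both collapse to $2^{-1/p}(p+1)/(2p)$, gives the lower bound after the twin substitution and integration, and the $a\leftrightarrow b$ symmetrization together with the identifications $\int_0^1(\tfrac1p t^{1/p}+\tfrac{p-1}{p}t^{1+1/p})dt=\tfrac{p(p+2)}{(p+1)(2p+1)}$, $\tfrac{p-1}{p}\beta(\tfrac{p+1}{p},2)$ and $\tfrac1p\beta(\tfrac1p,2)$ gives the upper bound. The only (cosmetic) difference is that the paper carries the prefactor $(\tfrac1p+1)(\tfrac12)^{1+1/p}$ through the upper-bound computation before it cancels, whereas you discard it at the outset.
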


\begin{proof}
By the Young-convexity of $f,$ we have that%
\begin{equation*}
f\left( \frac{x+y}{2}\right) \leq \left( \frac{1}{p}+1\right) \left( \frac{1%
}{2}\right) ^{\frac{1}{p}+1}\left( f\left( x\right) +f\left( y\right)
\right) .
\end{equation*}%
If we choose $x=ta+(1-t)b,$ $y=tb+(1-t)a$, we get%
\begin{equation}
f\left( \frac{a+b}{2}\right) \leq \allowbreak \left( \frac{1}{p}+1\right)
\left( \frac{1}{2}\right) ^{\frac{1}{p}+1}\left( f\left( ta+(1-t)b\right)
+f\left( tb+(1-t)a\right) \right) .  \label{26}
\end{equation}%
for all $t\in \left[ 0,1\right] $. Then, integrating both side of (\ref{26})
with respect to $t$ on $\left[ 0,1\right] ,$ we have%
\begin{equation*}
f\left( \frac{a+b}{2}\right) \leq \left( \frac{1}{p}+1\right) \left( \frac{1%
}{2}\right) ^{\frac{1}{p}+1}\int_{0}^{1}\left( f\left( ta+(1-t)b\right)
+f\left( tb+(1-t)a\right) \right) dt.
\end{equation*}%
Use of the changing of variable, we have%
\begin{equation*}
2^{\frac{1}{p}}\frac{p}{p+1}f\left( \frac{a+b}{2}\right) \leq \frac{1}{b-a}%
\int_{a}^{b}f(x)dx,
\end{equation*}%
which is the first inequality in $\left( \ref{25}\right) .$

To prove the second inequality in $\left( \ref{25}\right) $, we use the
right side of (\ref{26}) and using Young-convexity of $f$ , we have%
\begin{eqnarray*}
&&\left( \frac{1}{p}+1\right) \left( \frac{1}{2}\right) ^{\frac{1}{p}%
+1}\left( f\left( ta+(1-t)b\right) +f\left( tb+(1-t)a\right) \right) \\
&\leq &\left( \frac{1}{p}+1\right) \left( \frac{1}{2}\right) ^{\frac{1}{p}%
+1}\left( f\left( a\right) +f(b)\right) \left( \frac{1}{p}t^{\frac{1}{p}}+%
\frac{p-1}{p}t^{1+\frac{1}{p}}+\frac{p-1}{p}\left( 1-t\right) t^{\frac{1}{p}%
}+\frac{1}{p}t^{\frac{1}{p}-1}\left( 1-t\right) \right) .
\end{eqnarray*}%
Integrating the both side of the above inequality with respect to $t$ on $%
\left[ 0,1\right] $, we have%
\begin{eqnarray*}
&&\frac{1}{b-a}\int_{a}^{b}f\left( x\right) dx \\
&\leq &\left( \frac{p\left( p+2\right) }{\left( p+1\right) \left(
1+2p\right) }+\frac{p-1}{p}\beta \left( \frac{1+p}{p},2\right) +\frac{1}{p}%
\beta \left( \frac{1}{p},2\right) \right) \left( \frac{f\left( a\right) +f(b)%
}{2}\right)
\end{eqnarray*}%
Which is the second inequality in (\ref{25}).
\end{proof}

\begin{remark}
If we choose $p\rightarrow 1$, in (\ref{25}), we obtain the inequality of (%
\ref{h-h}).
\end{remark}

\begin{theorem}
Let $f,g:I\subset 
%TCIMACRO{\U{211d} }%
%BeginExpansion
\mathbb{R}
%EndExpansion
\rightarrow \lbrack 0,\infty )$ be Young-convex functions and $p>1.$ Then
the following inequality holds:%
\begin{eqnarray}
&&  \label{aa} \\
&&\frac{1}{b-a}\int_{a}^{b}f\left( x\right) g\left( x\right) dx  \notag \\
&\leq &\left( \frac{1}{p(2+p)}+\frac{p-1}{p(1+p)}+\frac{\left( p-1\right)
^{2}}{p(2+3p)}\right) f\left( a\right) g\left( a\right)  \notag \\
&&+\left( \left( \frac{p-1}{p}\right) ^{2}\beta \left( \frac{2}{p}%
+1,3\right) +\frac{2\left( p-1\right) }{p^{2}}\beta \left( \frac{2}{p}%
,3\right) +\frac{1}{p^{2}}\beta \left( \frac{2}{p}-1,3\right) \right)
f\left( b\right) g\left( b\right)  \notag \\
&&+\left( \frac{p-1}{p^{2}}\beta \left( \frac{2}{p},2\right) +\left( \frac{%
p-1}{p}\right) ^{2}\beta \left( \frac{2}{p}+2,2\right) +\frac{1}{p}\beta
\left( \frac{2}{p}+1,2\right) \right) \left( f\left( a\right) g\left(
b\right) +f\left( b\right) g\left( a\right) \right) .  \notag
\end{eqnarray}
\end{theorem}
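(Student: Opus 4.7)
The plan is to follow the Pachpatte template from Theorem~1 (inequality (\ref{PA})), replacing ordinary convexity throughout by Young-convexity. First I would abbreviate
\[
A(t) = \frac{1}{p}t^{1/p} + \frac{p-1}{p}t^{1+1/p},\qquad B(t) = \frac{p-1}{p}(1-t)t^{1/p} + \frac{1}{p}(1-t)t^{1/p-1},
\]
so that applying (\ref{def}) separately to $f$ and to $g$ at the point $ta+(1-t)b$ (with $x=a$, $y=b$) gives the two pointwise bounds
\[
f(ta+(1-t)b)\leq A(t)f(a)+B(t)f(b),\qquad g(ta+(1-t)b)\leq A(t)g(a)+B(t)g(b),
\]
valid for every $t\in(0,1]$.

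Next, because $f,g$ take values in $[0,\infty)$, both right-hand sides above are non-negative, so I can multiply the two inequalities and obtain
\[
f(ta+(1-t)b)\,g(ta+(1-t)b)\leq A(t)^{2}f(a)g(a)+A(t)B(t)\bigl[f(a)g(b)+f(b)g(a)\bigr]+B(t)^{2}f(b)g(b).
\]
Integrating both sides in $t$ over $[0,1]$ and performing the change of variable $x=ta+(1-t)b$ on the left turns its integral into $\frac{1}{b-a}\int_{a}^{b}f(x)g(x)\,dx$, reducing the problem to evaluating the three one-dimensional integrals $\int_{0}^{1}A(t)^{2}\,dt$, $\int_{0}^{1}A(t)B(t)\,dt$, and $\int_{0}^{1}B(t)^{2}\,dt$.

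To finish, I would expand each product and identify every monomial of the form $c\,t^{\alpha}(1-t)^{\beta}$ as $c\,\beta(\alpha+1,\beta+1)$ via $\beta(m,n)=\int_{0}^{1}t^{m-1}(1-t)^{n-1}\,dt$. Since $A(t)$ carries no $(1-t)$ factor, $A(t)^{2}$ reduces to three elementary reciprocals $\frac{1}{p(p+2)}$, $\frac{p-1}{p(p+1)}$, $\frac{(p-1)^{2}}{p(3p+2)}$ that assemble into the stated coefficient of $f(a)g(a)$; $B(t)^{2}$ carries a factor $(1-t)^{2}$ and yields three $\beta(\cdot,3)$ values weighting $f(b)g(b)$; and $A(t)B(t)$ carries a single factor $(1-t)$ and yields three $\beta(\cdot,2)$ values weighting the symmetric sum $f(a)g(b)+f(b)g(a)$.

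The only genuine difficulty here is bookkeeping rather than analysis. In particular, in $A(t)B(t)$ the cross term $\frac{p-1}{p}t^{1+1/p}\cdot\frac{1}{p}(1-t)t^{1/p-1}$ collapses to a $t^{2/p}(1-t)$ power and must be combined with the symmetric term $\frac{1}{p}t^{1/p}\cdot\frac{p-1}{p}(1-t)t^{1/p}$, so care is needed to ensure that the Beta arguments $\frac{2}{p}-1,\frac{2}{p},\frac{2}{p}+1,\frac{2}{p}+2$ land in the correct slots of (\ref{aa}) with the correct scalar prefactors. Beyond this, no further analytic tool is needed.
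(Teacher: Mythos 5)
Your proposal is correct and follows essentially the same route as the paper: apply the Young-convexity bound to $f$ and $g$ separately at $ta+(1-t)b$, multiply using non-negativity, integrate over $[0,1]$, and evaluate the resulting integrals of $A(t)^{2}$, $A(t)B(t)$, $B(t)^{2}$ via Beta functions before changing variables. Your careful expansion of the cross term $A(t)B(t)$ yields the coefficients $\frac{2(p-1)}{p^{2}}\beta\left(\frac{2}{p}+1,2\right)+\left(\frac{p-1}{p}\right)^{2}\beta\left(\frac{2}{p}+2,2\right)+\frac{1}{p^{2}}\beta\left(\frac{2}{p},2\right)$, which agree with the paper's own intermediate display (and differ from the prefactors printed in the theorem statement, apparently a typographical slip there rather than a flaw in your argument).
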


\begin{proof}
\bigskip Since $f,g$ are Young-convex functions on $I,$ we have%
\begin{eqnarray*}
f\left( ta+\left( 1-t\right) b\right) &\leq &\left( \frac{1}{p}t^{\frac{1}{p}%
}+\frac{p-1}{p}t^{1+\frac{1}{p}}\right) f\left( a\right) +\left( \frac{p-1}{p%
}\left( 1-t\right) t^{\frac{1}{p}}+\frac{1}{p}t^{\frac{1}{p}-1}\left(
1-t\right) \right) f\left( b\right) \\
g\left( ta+\left( 1-t\right) b\right) &\leq &\left( \frac{1}{p}t^{\frac{1}{p}%
}+\frac{p-1}{p}t^{1+\frac{1}{p}}\right) g\left( a\right) +\left( \frac{p-1}{p%
}\left( 1-t\right) t^{\frac{1}{p}}+\frac{1}{p}t^{\frac{1}{p}-1}\left(
1-t\right) \right) g\left( b\right)
\end{eqnarray*}%
for all $t\in \left( 0,1\right] .$ Since $f$ and $g$ are non-negative,%
\begin{eqnarray*}
&&f\left( ta+\left( 1-t\right) b\right) g\left( ta+\left( 1-t\right) b\right)
\\
&\leq &\left[ \left( \frac{1}{p}t^{\frac{1}{p}}+\frac{p-1}{p}t^{1+\frac{1}{p}%
}\right) f\left( a\right) +\left( \frac{p-1}{p}\left( 1-t\right) t^{\frac{1}{%
p}}+\frac{1}{p}t^{\frac{1}{p}-1}\left( 1-t\right) \right) f\left( b\right) %
\right] \\
&&\times \left[ \left( \frac{1}{p}t^{\frac{1}{p}}+\frac{p-1}{p}t^{1+\frac{1}{%
p}}\right) g\left( a\right) +\left( \frac{p-1}{p}\left( 1-t\right) t^{\frac{1%
}{p}}+\frac{1}{p}t^{\frac{1}{p}-1}\left( 1-t\right) \right) g\left( b\right) %
\right] \\
&=&\left[ \left( \frac{1}{p}t^{\frac{1}{p}}+\frac{p-1}{p}t^{1+\frac{1}{p}%
}\right) \left( \frac{1}{p}t^{\frac{1}{p}}+\frac{p-1}{p}t^{1+\frac{1}{p}%
}\right) \right] f\left( a\right) g\left( a\right) \\
&&+\left[ \left( \frac{p-1}{p}\left( 1-t\right) t^{\frac{1}{p}}+\frac{1}{p}%
t^{\frac{1}{p}-1}\left( 1-t\right) \right) \left( \frac{p-1}{p}\left(
1-t\right) t^{\frac{1}{p}}+\frac{1}{p}t^{\frac{1}{p}-1}\left( 1-t\right)
\right) \right] f\left( b\right) g\left( b\right) \\
&&+\left[ \left( \frac{1}{p}t^{\frac{1}{p}}+\frac{p-1}{p}t^{1+\frac{1}{p}%
}\right) \left( \frac{p-1}{p}\left( 1-t\right) t^{\frac{1}{p}}+\frac{1}{p}t^{%
\frac{1}{p}-1}\left( 1-t\right) \right) \right] f\left( a\right) g\left(
b\right) \\
&&+\left[ \left( \frac{1}{p}t^{\frac{1}{p}}+\frac{p-1}{p}t^{1+\frac{1}{p}%
}\right) \left( \frac{p-1}{p}\left( 1-t\right) t^{\frac{1}{p}}+\frac{1}{p}t^{%
\frac{1}{p}-1}\left( 1-t\right) \right) \right] f\left( b\right) g\left(
a\right)
\end{eqnarray*}%
Then if we integrate the both side of the inequality above with respect to $%
t $ on $[0,1]$, we have%
\begin{eqnarray*}
&&f\left( ta+\left( 1-t\right) b\right) g\left( ta+\left( 1-t\right) b\right)
\\
&\leq &\left( \frac{1}{p(2+p)}+\frac{p-1}{p(1+p)}+\frac{\left( p-1\right)
^{2}}{p(2+3p)}\right) f\left( a\right) g\left( a\right) \\
&&+\left( \left( \frac{p-1}{p}\right) ^{2}\beta \left( \frac{2}{p}%
+1,3\right) +\frac{2\left( p-1\right) }{p^{2}}\beta \left( \frac{2}{p}%
,3\right) +\frac{1}{p^{2}}\beta \left( \frac{2}{p}-1,3\right) \right)
f\left( b\right) g\left( b\right) \\
&&+\left( \frac{2\left( p-1\right) }{p^{2}}\beta \left( \frac{2}{p}%
+1,2\right) +\left( \frac{p-1}{p}\right) ^{2}\beta \left( \frac{2}{p}%
+2,2\right) +\frac{1}{p^{2}}\beta \left( \frac{2}{p},2\right) \right) \left(
f\left( a\right) g\left( b\right) +f\left( b\right) g\left( a\right) \right)
.
\end{eqnarray*}%
By changing of the variables, we obtain the desired result.
\end{proof}

\begin{remark}
If we choose $p\rightarrow 1$, in (\ref{aa}), we obtain the inequality of (%
\ref{PA}).
\end{remark}

\section{\protect\bigskip Nesbitt-Convexity and Related Inequalities}

\begin{remark}
If we take $a=t$, $b=\frac{1}{2}$ and $c=1-t$ in (\ref{no}), we have%
\begin{equation}
1\leq \frac{2t}{3-2t}+\frac{2(1-t)}{1+2t}  \label{1}
\end{equation}%
for all $t\in \left[ 0,1\right] .$
\end{remark}

We now give a new definition of the following using the above remark.

\begin{definition}
Let $f:I\rightarrow \lbrack 0,\infty ),$ $\emptyset \neq I\subseteq 
%TCIMACRO{\U{211d} }%
%BeginExpansion
\mathbb{R}
%EndExpansion
,$ $x,y>0.$ We say that $f:I\subset 
%TCIMACRO{\U{211d} }%
%BeginExpansion
\mathbb{R}
%EndExpansion
\rightarrow \lbrack 0,\infty )$ is Nesbitt-convex function, if 
\begin{equation}
f\left( tx+\left( 1-t\right) y\right) \leq \left( \frac{2t^{2}}{3-2t}+\frac{%
2t(1-t)}{1+2t}\right) f\left( x\right) +\left( \frac{2t(1-t)}{3-2t}+\frac{%
2(1-t)^{2}}{1+2t}\right) f\left( y\right)  \label{dn}
\end{equation}%
for all $x,y\in I$ and $t\in \left( 0,1\right] .$ We denote this by $f\in
Nsb(I).$ If the inequality (\ref{dn}) is reversed,\ then $f$ is said to be
Nesbitt-concave function.
\end{definition}

\begin{remark}
If we choose $t=\frac{1}{2}$, in (\ref{dn}), we obtain the inequality of (%
\ref{j}).
\end{remark}

\begin{theorem}
Let $f:I\subset 
%TCIMACRO{\U{211d} }%
%BeginExpansion
\mathbb{R}
%EndExpansion
\rightarrow \lbrack 0,\infty )$ be a Nesbitt-convex function$.$ Then the
following inequality holds:%
\begin{equation}
f\left( \frac{a+b}{2}\right) \leq \frac{1}{b-a}\int_{a}^{b}f(x)dx\leq \ln 
\frac{3\sqrt{3}}{e}\left( f\left( a\right) +f(b)\right)  \label{a1}
\end{equation}%
$\allowbreak $
\end{theorem}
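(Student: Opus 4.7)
The plan is to follow the same template as the corresponding Young-convex Hadamard-type theorem. For the left inequality, I would exploit the observation made in the preceding Remark: taking $t=1/2$ in the definition of Nesbitt-convexity recovers the Jensen midpoint inequality $f((x+y)/2)\le (f(x)+f(y))/2$. Applying this with $x=ta+(1-t)b$ and $y=tb+(1-t)a$, whose midpoint is $(a+b)/2$, gives
\begin{equation*}
f\!\left(\frac{a+b}{2}\right)\le \frac{1}{2}\bigl(f(ta+(1-t)b)+f(tb+(1-t)a)\bigr).
\end{equation*}
Integrating over $t\in[0,1]$ and applying the linear changes of variable $x=ta+(1-t)b$ and $x=tb+(1-t)a$ in the two resulting integrals yields the desired lower bound $f((a+b)/2)\le \frac{1}{b-a}\int_a^b f(x)\,dx$.

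For the upper bound, I would apply the Nesbitt-convexity inequality \eqref{dn} directly with $x=a$, $y=b$, and integrate over $t\in[0,1]$. After the change of variable on the left, this produces
\begin{equation*}
\frac{1}{b-a}\int_a^b f(x)\,dx \le C_1\, f(a)+C_2\, f(b),
\end{equation*}
where $C_1=\int_0^1\!\bigl(\tfrac{2t^2}{3-2t}+\tfrac{2t(1-t)}{1+2t}\bigr)dt$ and $C_2=\int_0^1\!\bigl(\tfrac{2t(1-t)}{3-2t}+\tfrac{2(1-t)^2}{1+2t}\bigr)dt$. The substitution $t\mapsto 1-t$ in $C_2$ shows $C_1=C_2$, so only one constant has to be computed and the $f(a)$ and $f(b)$ terms combine into a common factor of $f(a)+f(b)$.

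The remaining task is to evaluate $C_1$. Using the substitutions $u=3-2t$ and $v=1+2t$ (equivalently, polynomial long division), each integrand splits into a polynomial part plus a term of the form $\text{const}/u$ or $\text{const}/v$. A short calculation gives
\begin{equation*}
\int_0^1\frac{2t^2}{3-2t}\,dt=\tfrac{9}{4}\ln 3-2,\qquad \int_0^1\frac{2t(1-t)}{1+2t}\,dt=1-\tfrac{3}{4}\ln 3,
\end{equation*}
so $C_1=\tfrac{3}{2}\ln 3-1$. Rewriting this as $\ln(3^{3/2})-\ln e=\ln\frac{3\sqrt 3}{e}$ matches the constant in \eqref{a1} exactly, which completes the proof.

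The only real obstacle is bookkeeping the four rational integrals; there is no conceptual difficulty beyond what the Young-convex analogue already requires. The one small trick is recognizing the closed form $\tfrac{3}{2}\ln 3-1=\ln(3\sqrt{3}/e)$, which is what makes the final constant clean.
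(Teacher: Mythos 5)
Your proof is correct and follows essentially the same route as the paper: the left inequality is obtained identically (via the $t=\tfrac12$ midpoint consequence of Nesbitt-convexity applied to $x=ta+(1-t)b$, $y=tb+(1-t)a$, then integrating), and the right inequality rests on the same integration of the defining coefficients. The only cosmetic difference is that the paper symmetrizes $f(ta+(1-t)b)+f(tb+(1-t)a)$ before integrating, which produces the combined constant $3\ln 3-2$ on $\tfrac{f(a)+f(b)}{2}$, whereas you integrate the definition directly and use the $t\mapsto 1-t$ symmetry to see $C_1=C_2=\tfrac32\ln 3-1=\ln\tfrac{3\sqrt3}{e}$; both computations agree.
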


\begin{proof}
By the Nesbitt-convexity of $f,$ we have that%
\begin{equation*}
f\left( \frac{x+y}{2}\right) \leq \frac{f\left( x\right) +f\left( y\right) }{%
2}.
\end{equation*}%
If we choose $x=ta+(1-t)b,$ $y=tb+(1-t)a$, we get%
\begin{equation}
f\left( \frac{a+b}{2}\right) \leq \frac{f\left( ta+(1-t)b\right) +f\left(
tb+(1-t)a\right) }{2}.  \label{a2}
\end{equation}%
for all $t\in \left[ 0,1\right] $. Then, integrating both side of the
resulting inequality with respect to $t$ on $\left[ 0,1\right] ,$ we have%
\begin{equation*}
f\left( \frac{a+b}{2}\right) \leq \frac{1}{2}\int_{0}^{1}\left( f\left(
ta+(1-t)b\right) +f\left( tb+(1-t)a\right) \right) dt.
\end{equation*}%
Use of the changing of variable, we have%
\begin{equation*}
f\left( \frac{a+b}{2}\right) \leq \frac{1}{b-a}\int_{a}^{b}f(x)dx,
\end{equation*}%
which is the first inequality in $\left( \ref{a1}\right) .$

To prove the second inequality in $\left( \ref{a1}\right) $, we use the
right side of (\ref{a2}) and by using Nesbitt-convexity of $f$ , we have%
\begin{eqnarray*}
&&\frac{\left( f\left( ta+(1-t)b\right) +f\left( tb+(1-t)a\right) \right) }{2%
} \\
&\leq &\left[ \frac{f\left( a\right) +f\left( b\right) }{2}\right] \left(
\left( \frac{2t^{2}}{3-2t}+\frac{2t(1-t)}{1+2t}\right) +\left( \frac{2t(1-t)%
}{3-2t}+\frac{2(1-t)^{2}}{1+2t}\right) \right) .
\end{eqnarray*}%
Integrating the both side of the above inequality with respect to $t$ on $%
\left[ 0,1\right] $, we have%
\begin{equation*}
\frac{1}{b-a}\int_{a}^{b}f\left( x\right) dx\leq \left( \allowbreak 3\ln
3-2\right) \left[ \frac{f\left( a\right) +f\left( b\right) }{2}\right]
\end{equation*}%
$\allowbreak $Which is the second inequality in (\ref{25}).
\end{proof}

\begin{theorem}
\label{sim}Let $f,g:I\subset 
%TCIMACRO{\U{211d} }%
%BeginExpansion
\mathbb{R}
%EndExpansion
\rightarrow \lbrack 0,\infty )$ be Nesbitt-convex functions$.$ Then the
following inequality holds:%
\begin{eqnarray}
&&  \label{a3} \\
&&\frac{1}{b-a}\int_{a}^{b}f\left( x\right) g\left( x\right) dx  \notag \\
&\leq &\left( \frac{125}{6}-\frac{147}{8}\ln 3\right) \left( f\left(
a\right) g\left( a\right) +f\left( b\right) g\left( b\right) \right)  \notag
\\
&&+\left( \allowbreak \frac{117}{8}\ln 3-\frac{95}{6}\right) \left( f\left(
a\right) g\left( b\right) +f\left( b\right) g\left( a\right) \right) . 
\notag
\end{eqnarray}
\end{theorem}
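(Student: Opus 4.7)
The plan is to imitate the Young-convex analogue (inequality~(\ref{aa})), replacing the Young weights by the Nesbitt weights. Set
\[
A(t) = \frac{2t^{2}}{3-2t} + \frac{2t(1-t)}{1+2t}, \qquad
B(t) = \frac{2t(1-t)}{3-2t} + \frac{2(1-t)^{2}}{1+2t},
\]
so that Nesbitt-convexity~(\ref{dn}), applied at $x=a$, $y=b$, gives the pointwise bounds
\[
f(ta+(1-t)b) \leq A(t)f(a) + B(t)f(b), \qquad g(ta+(1-t)b) \leq A(t)g(a) + B(t)g(b)
\]
for every $t\in(0,1]$.

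Since $f,g\geq 0$, the two bounds may be multiplied, yielding
\[
(fg)(ta+(1-t)b) \leq A(t)^{2}\,f(a)g(a) + B(t)^{2}\,f(b)g(b) + A(t)B(t)\bigl(f(a)g(b)+f(b)g(a)\bigr).
\]
I would then integrate over $t\in[0,1]$ and perform the linear change of variable $x=ta+(1-t)b$ on the left to obtain $\frac{1}{b-a}\int_{a}^{b}f(x)g(x)\,dx$. Note that $B(t)=A(1-t)$, so the substitution $t\mapsto 1-t$ shows $\int_{0}^{1}A(t)^{2}\,dt = \int_{0}^{1}B(t)^{2}\,dt$; this explains why $f(a)g(a)$ and $f(b)g(b)$ end up with the same coefficient, and likewise for the mixed terms $f(a)g(b)$, $f(b)g(a)$.

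The only remaining task, and the main obstacle, is the explicit evaluation of the two scalar integrals $\int_{0}^{1}A(t)^{2}\,dt$ and $\int_{0}^{1}A(t)B(t)\,dt$. Expanding the squares and products produces rational functions whose denominators are products of powers of $3-2t$ and $1+2t$; a partial-fraction decomposition reduces each to terms of the form $(3-2t)^{-k}$, $(1+2t)^{-k}$, and constants. The constant and negative-power terms integrate to rationals, while the $(3-2t)^{-1}$ and $(1+2t)^{-1}$ pieces contribute logarithms, using $\int_{0}^{1}\frac{dt}{3-2t} = \tfrac{1}{2}\ln 3$ and $\int_{0}^{1}\frac{dt}{1+2t} = \tfrac{1}{2}\ln 3$. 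Collecting the rational and $\ln 3$ contributions should deliver the coefficients $\tfrac{125}{6}-\tfrac{147}{8}\ln 3$ and $\tfrac{117}{8}\ln 3 - \tfrac{95}{6}$ stated in~(\ref{a3}). The computation is purely mechanical but bookkeeping-heavy, and it is here that the proof spends nearly all of its effort.
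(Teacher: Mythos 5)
Your proposal follows exactly the paper's argument: apply the Nesbitt-convexity bound to $f$ and $g$ at $x=a$, $y=b$, multiply the two pointwise estimates using non-negativity, integrate over $t\in[0,1]$, and change variables; the paper likewise leaves the evaluation of $\int_0^1 A(t)^2\,dt$ and $\int_0^1 A(t)B(t)\,dt$ as an unstated computation. Your observation that $B(t)=A(1-t)$, which forces equal coefficients on $f(a)g(a)$ and $f(b)g(b)$, is a small bonus not present in the paper but changes nothing essential.
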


\begin{proof}
Since $f,g$ are Nesbitt's-convex functions on $I,$ we have%
\begin{eqnarray*}
f\left( ta+\left( 1-t\right) b\right) &\leq &\left( \frac{2t^{2}}{3-2t}+%
\frac{2t(1-t)}{1+2t}\right) f\left( a\right) +\left( \frac{2t(1-t)}{3-2t}+%
\frac{2(1-t)^{2}}{1+2t}\right) f\left( b\right) \\
g\left( ta+\left( 1-t\right) b\right) &\leq &\left( \frac{2t^{2}}{3-2t}+%
\frac{2t(1-t)}{1+2t}\right) g\left( a\right) +\left( \frac{2t(1-t)}{3-2t}+%
\frac{2(1-t)^{2}}{1+2t}\right) g\left( b\right)
\end{eqnarray*}%
for all $t\in \left( 0,1\right] .$ Since $f$ and $g$ are non-negative,%
\begin{eqnarray*}
&&f\left( ta+\left( 1-t\right) b\right) g\left( ta+\left( 1-t\right) b\right)
\\
&\leq &\left[ \left( \frac{2t^{2}}{3-2t}+\frac{2t(1-t)}{1+2t}\right) f\left(
a\right) +\left( \frac{2t(1-t)}{3-2t}+\frac{2(1-t)^{2}}{1+2t}\right) f\left(
b\right) \right] \\
&&\times \left[ \left( \frac{2t^{2}}{3-2t}+\frac{2t(1-t)}{1+2t}\right)
g\left( a\right) +\left( \frac{2t(1-t)}{3-2t}+\frac{2(1-t)^{2}}{1+2t}\right)
g\left( b\right) \right]
\end{eqnarray*}%
Then if we integrate the both side of the resulting inequality with respect
to $t$ on $[0,1]$, we have%
\begin{eqnarray}
&&\int_{0}^{1}f\left( ta+\left( 1-t\right) b\right) g\left( ta+\left(
1-t\right) b\right) dt  \label{a31} \\
&\leq &\left[ \frac{125}{6}-\frac{147}{8}\ln 3\right] \left[ f\left(
a\right) g\left( a\right) +f\left( b\right) g\left( b\right) \right]  \notag
\\
&&+\left[ \allowbreak \frac{117}{8}\ln 3-\frac{95}{6}\right] \left[ f\left(
a\right) g\left( b\right) +f\left( b\right) g\left( a\right) \right] . 
\notag
\end{eqnarray}%
By changing of the variables, we obtain the desired result.
\end{proof}

\begin{theorem}
$\bigskip $Let $f,g:I\subset 
%TCIMACRO{\U{211d} }%
%BeginExpansion
\mathbb{R}
%EndExpansion
\rightarrow \lbrack 0,\infty )$ be Nesbitt-convex and similarly ordered
functions$.$ Then the following inequality holds:%
\begin{eqnarray}
&&\frac{1}{b-a}\int_{a}^{b}f\left( x\right) g\left( x\right) dx  \label{a5}
\\
&\leq &\left( 5-\frac{30}{8}\ln 3\right) \left( f\left( a\right) g\left(
a\right) +f\left( b\right) g\left( b\right) \right)  \notag \\
&=&0.880\,2\times M(a,b).  \notag
\end{eqnarray}
\end{theorem}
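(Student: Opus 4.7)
The plan is to deduce this from Theorem~\ref{sim} together with the similarly ordered hypothesis, which controls the ``mixed'' term $f(a)g(b)+f(b)g(a)$ by the ``same-point'' term $f(a)g(a)+f(b)g(b)$. Recall that $f$ and $g$ are said to be similarly ordered on $I$ if $(f(x)-f(y))(g(x)-g(y))\ge 0$ for all $x,y\in I$; expanding this at $x=a,\,y=b$ yields
\begin{equation*}
f(a)g(b)+f(b)g(a)\le f(a)g(a)+f(b)g(b).
\end{equation*}

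First, I would invoke the conclusion of Theorem~\ref{sim}, which gives
\begin{equation*}
\frac{1}{b-a}\int_{a}^{b}f(x)g(x)\,dx\le A\,(f(a)g(a)+f(b)g(b))+B\,(f(a)g(b)+f(b)g(a)),
\end{equation*}
with $A=\frac{125}{6}-\frac{147}{8}\ln 3$ and $B=\frac{117}{8}\ln 3-\frac{95}{6}$. The crux of the proof is to verify that $B\ge 0$, since this is what allows the similarly ordered inequality to be applied without reversing. A direct numerical estimate using $\ln 3\approx 1.0986$ gives $B\approx 0.23>0$; for a rigorous bound one can use $\ln 3>\frac{10}{9}$ (since $e^{10/9}<3$), which yields $\frac{117}{8}\ln 3>\frac{117\cdot 10}{72}=\frac{1170}{72}=16.25>\frac{95}{6}\approx 15.83$.

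Once $B\ge 0$ is established, I would substitute the similarly ordered bound $f(a)g(b)+f(b)g(a)\le f(a)g(a)+f(b)g(b)$ into the Theorem~\ref{sim} estimate to obtain
\begin{equation*}
\frac{1}{b-a}\int_{a}^{b}f(x)g(x)\,dx\le (A+B)\,(f(a)g(a)+f(b)g(b)).
\end{equation*}
A short arithmetic simplification then yields
\begin{equation*}
A+B=\frac{125-95}{6}+\frac{117-147}{8}\ln 3=5-\frac{30}{8}\ln 3,
\end{equation*}
which is exactly the constant appearing in \eqref{a5}. A final numerical check confirms $5-\frac{30}{8}\ln 3\approx 0.8802$, matching the last line of the statement and the definition $M(a,b)=f(a)g(a)+f(b)g(b)$.

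The only non-routine step is the positivity of $B$; everything else is direct substitution and algebra. If $B$ turned out to be negative (or zero), the ``similarly ordered'' hypothesis could not be used in this direction and a different strategy would be needed, so confirming $\frac{117}{8}\ln 3>\frac{95}{6}$ is genuinely the pivot of the argument.
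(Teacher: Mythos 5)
Your argument is the same as the paper's: start from the bound of Theorem \ref{sim}, use the similarly ordered hypothesis to replace $f(a)g(b)+f(b)g(a)$ by $f(a)g(a)+f(b)g(b)$, and add the two constants to obtain $\frac{125-95}{6}+\frac{117-147}{8}\ln 3=5-\frac{30}{8}\ln 3$. You are also right that the substitution is only legitimate because the coefficient $B=\frac{117}{8}\ln 3-\frac{95}{6}$ is nonnegative, a point the paper passes over in silence. However, your proposed rigorous certificate for this is wrong: $e^{10/9}\approx 3.0377>3$, so in fact $\ln 3<\frac{10}{9}$, and both the inequality $\ln 3>\frac{10}{9}$ and the claim $e^{10/9}<3$ that you offer in its support are false. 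Fortunately $B>0$ still holds: the exact threshold is $\ln 3>\frac{95}{6}\cdot\frac{8}{117}=\frac{380}{351}\approx 1.0826$, and since $e^{1.09}\approx 2.974<3$ one has $\ln 3>1.09$, whence $\frac{117}{8}\cdot 1.09=15.941\ldots>\frac{95}{6}=15.83\ldots$. With that one repair your proof is complete and coincides with the paper's.
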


\begin{proof}
$\bigskip $Using similar arguments as in the proof of Theorem \ref{sim},
from (\ref{a31}), we can write%
\begin{eqnarray*}
&&\int_{0}^{1}f\left( ta+\left( 1-t\right) b\right) g\left( ta+\left(
1-t\right) b\right) dt \\
&\leq &\left[ \frac{125}{6}-\frac{147}{8}\ln 3\right] \left[ f\left(
a\right) g\left( a\right) +f\left( b\right) g\left( b\right) \right] \\
&&+\left[ \allowbreak \frac{117}{8}\ln 3-\frac{95}{6}\right] \left[ f\left(
a\right) g\left( b\right) +f\left( b\right) g\left( a\right) \right] \\
&\leq &\left[ \frac{125}{6}-\frac{147}{8}\ln 3\right] \left[ f\left(
a\right) g\left( a\right) +f\left( b\right) g\left( b\right) \right] \\
&&+\left[ \allowbreak \frac{117}{8}\ln 3-\frac{95}{6}\right] \left[ f\left(
a\right) g\left( a\right) +f\left( b\right) g\left( b\right) \right]
\end{eqnarray*}%
By changing of the variables, we obtain the desired result.
\end{proof}

\bigskip

\end{document}